\numberwithin{equation}{section}
\theoremstyle{plain}
\newtheorem{thm}{Theorem}[section]
\newtheorem{coro}[thm]{Corollary}
\newtheorem{thmA}{Theorem}
\newtheorem{conjectureA}[thmA]{Conjecture}
\theoremstyle{definition}
\newcommand{\N}{\mathbb{N}}
\newcommand{\Z}{\mathbb{Z}}
\newcommand{\C}{\mathbb{C}}
\newcommand{\h}{\mathcal{H}}
\newcommand{\Hol}{\mathcal{H}(\D)}
\newcommand{\D}{\mathbb{D}}
\newcommand{\T}{\mathbb{T}}
\title[Contractive inequalities]{Contractive inequalities between Dirichlet and Hardy spaces}
\author{Adrián Llinares}
\address{Adrián Llinares, Department of Mathematical Sciences, Norwegian University of Science and Technology, NO-7491 Trondheim, Norway}
\email{adrian.llinares@ntnu.no}
\keywords{Contractive inequalities, Dirichlet spaces, Hardy spaces, Bergman spaces, Riesz's projection}
\subjclass[2020]{30H10, 30H20 (Primary); 42A05, 30B50, 42B30 (Secondary)}
\date{\today}
\begin{document}

    \begin{abstract}
        We prove a conjecture of Brevig, Ortega-Cerdà, Seip and Zhao about contractive inequalities between Dirichlet and Hardy spaces and discuss its consequent connection with the Riesz projection.
    \end{abstract}

    \maketitle
    
    \section{Introduction}
    
        As usual, let $\D$ and $\T$ be the open unit disk in the complex plane $\C$ and its boundary, respectively. Given an holomorphic function $f$ in $\D$ and an exponent $p > 0$, $M_p(r,f)$ will denote its integral $p$-mean on the circle centered at the origin with radius $r$, 
            \[
                M_p(r,f) := \left ( \dfrac{1}{2\pi} \int_0^{2\pi} |f(re^{it})|^p \: dt \right)^\frac{1}{p}, \quad 0 \leq r < 1.
            \]
            
        We will say that $f$ belongs to the \emph{Hardy space} $H^p$ if its $M_p$ means are uniformly bounded with respect to the radius. It is well-known that the radial limit $f(e^{it})~:=~\displaystyle{\lim_{r \rightarrow 1^-} f(re^{it})}$ exists for almost every $t \in [0, 2\pi]$ if $f \in H^p$. Moreover, these radial limits are $p$-integrable on $\T$, so we can define the $H^p$-norm as follows:
            \[
                \| f \|_{H^p} := \left ( \dfrac{1}{2\pi} \int_0^{2\pi} | f(e^{it})|^p \: dt \right)^\frac{1}{p}.
            \]
            
        Of course $\| \cdot \|_{H^p}$ is a proper norm if $p \geq 1$. Furthermore, in this case $H^p$ can be identified with the subspace of $L^p(\T)$ functions with vanishing negative Fourier coefficients. That is,
            \[
                H^p = \left \{ f \in L^p (\T) \: : \: \widehat{f} (k) = 0, \forall k < 0 \right \},
            \]
            where
            \[
                \widehat{f}(k) := \dfrac{1}{2\pi} \int_0^{2\pi} f(e^{it}) e^{-ikt} \: dt, \quad k \in \Z.
            \]
        
        For more information regarding the elementary properties of Hardy spaces, we refer the reader to \cite{MR0268655}.

        Let $dA(re^{it}) := \frac{r}{\pi} \: dr \, dt$ be the normalized area measure of $\D$. For $0~<~p$ and $\alpha > -1$, we define the \emph{standard weighted Bergman space} $A^p_\alpha$ as the set of all analytic functions whose moduli are $p$-integrable with respect to the measure $(1 - |z|^2)^\alpha \: dA(z)$. If $\alpha = 0$, we will simply write $A^p$ instead of $A^p_0$. It can be checked that, for a fixed $f$, the quantities
            \[
                \| f \|_{A^p_\alpha} := \left( (\alpha + 1) \int_\D (1 - |z|^2)^\alpha |f(z)|^p \: dA(z) \right)^\frac{1}{p}
            \]
            converge to $\| f \|_{H^p}$ when $\alpha$ approaches $-1$. Thus, we will understand $A^p_{-1}$ as $H^p$.
        
        For $p = 2$, $A^2_\alpha$ is a Hilbert space and $\| f \|_{A^2_\alpha}$ can be computed in terms of the Taylor coefficients of $f$. Specifically, we have that
            \[
                \| f \|_{A^2_\alpha} = \left ( \sum_{n = 0}^\infty \dfrac{|a_n|^2}{c_{\alpha + 2} (n)} \right)^\frac{1}{2},
            \]
            where $(1 - z)^{-\beta} = \displaystyle{\sum_{n \geq 0} c_\beta (n) z^n}$ and $\displaystyle{f(z) = \sum_{n \geq 0} a_n z^n}$.

        As a companion of $A^2_\alpha$, we introduce the \emph{weighted Dirichlet space} $D_\beta$ as the set of all holomorphic functions $f$ such that
            \[
                \| f \|_{D_\beta} := \left ( \sum_{n = 0}^\infty c_\beta (n) |a_n|^2 \right )^\frac{1}{2}
            \]
            is finite.

        Contractive inclusions between spaces of analytic functions have attracted the attention of the experts because of their multiple applications. For this work is especially relevant the following inequality, which was conjectured by Brevig, Ortega-Cerdà, Seip and Zhao \cite{MR3858278} and Lieb and Solovej \cite{MR4331821} and recently proved by Kulikov \cite{MR4472587}.
        
        \begin{thmA}\label{Kulikov}
            Let $0 < p < q$ and $-1 \leq \alpha < \beta$ such that $\frac{\alpha + 2}{p} = \frac{\beta + 2}{q}$. Then we have that
                \begin{equation} \label{KulikovDisp}
                    \| f \|_{A^q_\beta} \leq \| f \|_{A^p_\alpha}, \quad \forall f \in A^p_\alpha,
                \end{equation}
                and equality is possible if and only if there exists $\zeta \in \D$ and $C \in \C$ such that
                \begin{equation} \label{ReproducingKernel}
                    f(z) = C \dfrac{(1 - |\zeta|^2)^{\frac{\alpha + 2}{p}}}{(1 - \overline{\zeta}z)^{\frac{2(\alpha + 2)}{p}}}, \quad z \in \D.
                \end{equation}
        \end{thmA}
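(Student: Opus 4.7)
The plan is to reduce \eqref{KulikovDisp} to a sharp inequality that crucially uses the analyticity of $f$, and then to read off the extremal class from the equality discussion. Set $s := (\alpha+2)/p = (\beta+2)/q > 0$, so that both norms in \eqref{KulikovDisp} lie on the distinguished one-parameter family $\{A^r_{sr-2}\}_{r>0}$. The first step is to record the conformal invariance on this curve: for every $\zeta \in \D$ with $\phi_\zeta(z) = (\zeta-z)/(1-\overline{\zeta}z)$, the weighted composition operator $U_\zeta f := (f\circ\phi_\zeta)(\phi_\zeta')^s$ is an isometry of $A^r_{sr-2}$ for every $r$ with $sr>1$, as a change of variables using $1-|\phi_\zeta(z)|^2 = |\phi_\zeta'(z)|(1-|z|^2)$ shows. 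Since the putative extremal in \eqref{ReproducingKernel} is precisely $U_\zeta$ applied to a constant, by covariance it suffices to prove \eqref{KulikovDisp} and to show that equality forces $f$ constant; the Hardy boundary $\alpha=-1$ is recovered by letting $\alpha\to -1^+$.

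One might next attempt to pass to distribution functions via the layer-cake identity
\begin{equation*}
\|f\|_{A^r_\gamma}^r = r\int_0^\infty t^{r-1}\mu_\gamma(\{|f|>t\})\,dt, \qquad d\mu_\gamma := (\gamma+1)(1-|z|^2)^\gamma\, dA,
\end{equation*}
and then radially rearrange $|f|$ with respect to $\mu_\alpha$. Because the density $d\mu_\beta/d\mu_\alpha$ is a radially decreasing function on $\D$, the Hardy--Littlewood inequality gives $\mu_\beta(\{|f|>t\})\leq\mu_\beta(\{f^\ast>t\})$ for the $\mu_\alpha$-equimeasurable radial rearrangement $f^\ast$, whence $\|f\|_{A^q_\beta}\leq\|f^\ast\|_{A^q_\beta}$ while $\|f\|_{A^p_\alpha}=\|f^\ast\|_{A^p_\alpha}$. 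However, this reduction goes the \emph{wrong way}: testing with $f=\chi_{D_\rho}$ already shows that $\|g\|_{A^q_\beta}\leq\|g\|_{A^p_\alpha}$ fails for general non-holomorphic radial $g$ (the small-$\rho$ asymptotics do not match), so the analyticity of $f$ cannot be bypassed.

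The main obstacle is therefore to exploit holomorphicity in a form sharp enough to pin down the extremals. A natural vehicle is a Hardy--Stein-type identity expressing $\|f\|_{A^r_\gamma}^r$ as a weighted integral of $|f|^{r-2}|f'|^2$, which converts the norm comparison into a pointwise estimate governed by the log-subharmonicity of $|f|^r$. A complementary route is to show that $r\mapsto\log\|f\|_{A^r_{sr-2}}$ is non-increasing on $[p,q]$ by differentiating in $r$ and invoking Jensen's inequality against the Möbius-invariant measure $d\nu=(1-|z|^2)^{-2}dA$, together with Hardy's convexity theorem for the integral means of $f$. Either route is expected to lead, as in Kulikov's proof \cite{MR4472587}, to a delicate sharp estimate saturated exactly by Möbius images of constants. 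The equality case then forces every superlevel set of $|f|$ to be a pseudo-hyperbolic disk around a common $\zeta\in\D$, so that $|f|$ is a decreasing function of $|\phi_\zeta|$; combined with the holomorphicity of $f$, this determines $f$ up to a unimodular multiplicative constant and yields precisely the form \eqref{ReproducingKernel}.
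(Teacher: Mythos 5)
The paper does not actually prove Theorem \ref{Kulikov}: it is quoted as an external result of Kulikov \cite{MR4472587}, so there is no internal proof to compare against. Judged on its own terms, your proposal is not a proof but an annotated list of strategies, and the essential content is missing. The first paragraph (M\"obius covariance of the family $A^r_{sr-2}$ via $U_\zeta f = (f\circ\phi_\zeta)(\phi_\zeta')^s$) is correct and standard, and the second paragraph correctly diagnoses that plain radial rearrangement moves the inequality in the wrong direction; but these observations only normalize the problem, they do not solve it. The entire burden of the theorem lies in the ``delicate sharp estimate'' that you explicitly defer with the phrase ``is expected to lead, as in Kulikov's proof, to\dots''. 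Neither proposed route is carried out: you do not state the Hardy--Stein-type identity you would use, you do not compute the derivative of $r\mapsto\log\|f\|_{A^r_{sr-2}}$ or verify its sign, and you do not exhibit the pointwise or level-set inequality whose saturation would single out the reproducing kernels. (For the record, Kulikov's actual argument is a level-set/isoperimetric one: he studies the hyperbolic area of the superlevel sets of $u=|f|^p(1-|z|^2)^{\alpha+2}$ and derives a differential inequality for the associated distribution function from the isoperimetric inequality; nothing of that sort, nor any substitute for it, appears in your text.)

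Two further specific gaps. First, the reduction ``by covariance it suffices to show that equality forces $f$ constant'' is not justified: covariance shows that the set of extremizers is invariant under the operators $U_\zeta$, but to conclude that every extremizer is $U_\zeta$ applied to a constant you must first extract a distinguished point $\zeta$ from $f$ itself (in Kulikov's proof this drops out of the equality analysis of the isoperimetric step), and your proposal offers no mechanism for doing so; the final sentence, asserting that every superlevel set of $|f|$ is a pseudo-hyperbolic disk about a common center, is precisely the conclusion of the missing argument, not a consequence of anything you have established. Second, the claim that the Hardy boundary case $\alpha=-1$ ``is recovered by letting $\alpha\to-1^+$'' yields at most the inequality \eqref{KulikovDisp} in the limit; the characterization of equality does not pass to the limit, since strict inequalities can degenerate to equalities, so the case $\alpha=-1$ of the equality statement would require a separate argument.
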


        It is worth mentioning that, for $\alpha = -1$ and $q = kp$ being $k$ a positive integer, the inequality \eqref{KulikovDisp} was already known because of the works of Carleman \cite{MR1544458} and Burbea \cite{MR882113}.

        Although it was not explicitly stated in \cite{MR4472587}, from Theorem \ref{Kulikov} we can derive a complete characterization of contractive inclusions between weighted Bergman spaces.
        
        \begin{coro}\label{CharacBergmanContractive}
            Assume that $A^p_\alpha \subset A^q_\beta$ for some $p, q \in (0,\infty)$ and $\alpha, \beta \geq -1$. Then the inclusion operator $\iota : A^p_\alpha \rightarrow A^q_\beta$ is contractive if and only if $p < q$ or $p \geq q$ and $\alpha \leq \beta$. 
        \end{coro}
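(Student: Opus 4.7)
The plan is to prove both implications. For sufficiency I split on the hypothesis. If $p \ge q$ and $\alpha \le \beta$, I chain two elementary monotonicity principles: (a) Jensen's inequality applied to the probability measure $(\alpha+1)(1-|z|^2)^\alpha\,dA$ on $\D$ (or $dt/(2\pi)$ on $\T$ when $\alpha = -1$) yields $\|f\|_{A^q_\alpha} \le \|f\|_{A^p_\alpha}$; (b) substituting $u = 1-r^2$ in the radial integral exhibits $\|f\|_{A^p_\alpha}^p$ as the expectation of the non-increasing map $u \mapsto M_p(\sqrt{1-u},f)^p$ against the probability density $(\alpha+1)u^\alpha$ on $[0,1]$, which stochastically concentrates at $u=1$ as $\alpha$ grows, giving $\|f\|_{A^p_\beta} \le \|f\|_{A^p_\alpha}$ (the case $\alpha = -1$ is handled separately by $M_p(r,f) \le \|f\|_{H^p}$). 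Combining the two: $\|f\|_{A^q_\beta} \le \|f\|_{A^q_\alpha} \le \|f\|_{A^p_\alpha}$.

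If instead $p < q$, the inclusion assumption forces $(\alpha+2)/p \le (\beta+2)/q$, and setting $\gamma := q(\alpha+2)/p - 2$ one verifies $-1 < \gamma \le \beta$ and $\alpha < \gamma$. Theorem~\ref{Kulikov} applied with parameters $(p,\alpha,q,\gamma)$ then yields $\|f\|_{A^q_\gamma} \le \|f\|_{A^p_\alpha}$, and a final application of the weight monotonicity (b) replaces $\gamma$ by $\beta$.

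For the converse, I argue that $\alpha > \beta$ alone already precludes contractivity, which combined with the inclusion hypothesis settles the equivalence. Testing against $f_\varepsilon(z) := 1 + \varepsilon z^n$, a second-order binomial expansion of $|1+\varepsilon z^n|^p = (1+2\varepsilon\,\re(z^n)+\varepsilon^2|z|^{2n})^{p/2}$, followed by angular averaging (using $\int \re(z^n)\,d\theta/(2\pi) = 0$ and $\int \re(z^n)^2\,d\theta/(2\pi) = r^{2n}/2$) and radial integration, yields
\[
    \|1+\varepsilon z^n\|_{A^p_\alpha} = 1 + \frac{p\,\varepsilon^2}{4\,c_{\alpha+2}(n)} + O(\varepsilon^3).
\]
A contractive inclusion would therefore require $q\,c_{\alpha+2}(n) \le p\,c_{\beta+2}(n)$ for every $n \ge 1$; but the classical asymptotic $c_\gamma(n) \sim n^{\gamma-1}/\Gamma(\gamma)$ makes this ratio grow like a positive constant times $n^{\alpha-\beta}$, which diverges as $n \to \infty$ when $\alpha > \beta$, a contradiction for $n$ large enough.

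The main technical delicacy is the second-order expansion of $M_p(r,1+\varepsilon z^n)^p$: the linear $(p/2)\varepsilon^2 r^{2n}$ and quadratic $(p/2)(p/2-1)\varepsilon^2 r^{2n}$ binomial contributions collapse to $p^2 r^{2n}\varepsilon^2/4$ via the identity $(p/2)+(p/2)(p/2-1) = p^2/4$. Everything else is routine Jensen and stochastic-dominance reasoning together with a single application of the cited Theorem~\ref{Kulikov}.
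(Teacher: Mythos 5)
Your proof is correct, and its core step coincides with the paper's: for $p<q$ you invoke the embedding condition $\tfrac{\alpha+2}{p}\le\tfrac{\beta+2}{q}$, apply Theorem~\ref{Kulikov} at the intermediate weight $\gamma=\tfrac{q}{p}(\alpha+2)-2$, and finish with monotonicity of the norms in the weight --- this is exactly the chain $\|f\|_{A^q_\beta}\le\|f\|_{A^q_{\frac{q}{p}(\alpha+2)-2}}\le\|f\|_{A^p_\alpha}$ in the paper. Where you diverge is that the paper simply cites the literature for the remaining pieces (the sufficiency when $q\le p$, via the known characterization for mixed norm spaces, and implicitly the necessity), whereas you prove them from scratch: Jensen plus a stochastic-dominance argument in the radial variable for the case $p\ge q$, $\alpha\le\beta$, and a second-order perturbation of the test functions $1+\varepsilon z^n$ for the necessity. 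Both additions check out --- the expansion $\|1+\varepsilon z^n\|_{A^p_\alpha}=1+\tfrac{p\varepsilon^2}{4c_{\alpha+2}(n)}+O(\varepsilon^3)$ is right (the Beta integral gives exactly $1/c_{\alpha+2}(n)$, and $\tfrac{p}{2}+\tfrac{p}{2}(\tfrac{p}{2}-1)=\tfrac{p^2}{4}$), and the asymptotic $c_{\gamma}(n)\sim n^{\gamma-1}/\Gamma(\gamma)$ correctly forces $qc_{\alpha+2}(n)>pc_{\beta+2}(n)$ for large $n$ when $\alpha>\beta$. Your observation that $\alpha>\beta$ alone kills contractivity is consistent with the sufficiency half because the inclusion hypothesis together with $p<q$ already forces $\alpha<\beta$. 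The trade-off is self-containedness versus brevity: your version is longer but does not lean on the mixed-norm characterization the paper cites.
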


        \begin{proof}
            The case $q \leq p$ was already known (see for example \cite{MR4355935} for a complete characterization for mixed norm spaces). If $p < q$, then the condition $A^p_\alpha~\subset~A^q_\beta$ yields that $\frac{\alpha + 2}{p} \leq \frac{\beta + 2}{q}$ \cite[Theorem 69]{MR2537698} and consequently
                \[
                    \| f \|_{A^q_\beta} \leq \| f \|_{A^q_{\frac{q}{p}(\alpha + 2) - 2}} \leq \| f \|_{A^p_\alpha}, \quad \forall f \in A^p_\alpha,
                \]
                proving that the inclusion has norm equal to 1 in this case too.
        \end{proof}

        For a suitable choice of $p$, $q$, $\alpha$ and $\beta$, from \eqref{KulikovDisp} we can see that
            \[
                \left ( \sum_{n = 0}^\infty \dfrac{|a_n|^2}{c_{\frac{2}{p}}(n)}\right)^\frac{1}{2} \leq \| f \|_{H^p},
            \]
            for all $f \in \Hol$ and $p \in (0, 2]$. For $2 < p$, Brevig, Ortega-Cerdà, Seip and Zhao \cite{MR3858278} conjectured the following contractive inequality:
        
        \begin{conjectureA} \label{BOSZ}
            If $p > 2$, then the inequality
                \begin{equation} \label{BOSZDisp}
                    \| f \|_{H^p} \leq \| f \|_{D_{\frac{p}{2}}}
                \end{equation}
                holds for all $f$ analytic function in $\D$.
        \end{conjectureA}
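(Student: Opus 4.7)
The plan is to establish the conjecture in two stages: first prove the inequality when $p = 2k$ is an even integer, by a combinatorial Cauchy--Schwarz argument at the level of Taylor coefficients; and then extend to all real $p > 2$ by complex interpolation.

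For the even integer case $p = 2k$, the pointwise identity $|f(e^{it})|^{2k} = |f(e^{it})^k|^2$ gives
\[
\|f\|_{H^{2k}}^{2k} = \|f^k\|_{H^2}^2 = \sum_{n \geq 0} |b_n|^2,
\]
where $b_n = \sum_{n_1 + \cdots + n_k = n} a_{n_1} \cdots a_{n_k}$ are the Taylor coefficients of $f^k$. Since the number of weak compositions of $n$ into $k$ non-negative parts equals $c_k(n) = \binom{n + k - 1}{n}$, Cauchy--Schwarz yields $|b_n|^2 \leq c_k(n) \sum |a_{n_1}|^2 \cdots |a_{n_k}|^2$. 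After interchanging the order of summation, the desired bound reduces to the multiplicative estimate
\[
c_k(n_1 + \cdots + n_k) \leq c_k(n_1) \cdots c_k(n_k),
\]
which follows by induction from the binary case $c_k(a + b) \leq c_k(a) c_k(b)$. I would verify this last estimate by analyzing the function $j \mapsto c_k(j) c_k(N - j)$ on $\{0, 1, \ldots, N\}$ and showing that its minimum is attained at the endpoints, where it equals $c_k(N)$.

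To extend to real $p > 2$, I would apply Calder\'on complex interpolation to the identity embeddings $D_k \hookrightarrow H^{2k}$ and $D_{k+1} \hookrightarrow H^{2(k+1)}$. The Hardy-side interpolation gives $[H^{2k}, H^{2(k+1)}]_\theta = H^{p_\theta}$ with $1/p_\theta$ the harmonic mean of $1/(2k)$ and $1/(2(k+1))$, so $p_\theta$ sweeps the whole interval $[2k, 2(k+1)]$ as $\theta$ varies in $[0,1]$, and starting from $k = 1$ the union of these intervals covers all of $(2, \infty)$. The Dirichlet-side interpolation is the $\ell^2$ space with geometrically averaged weight $c_k(n)^{1-\theta} c_{k+1}(n)^\theta$.

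The principal obstacle I anticipate is the final comparison between this interpolated weight and the target weight $c_{p_\theta/2}(n)$: the Hardy exponents interpolate harmonically while the Dirichlet coefficients interpolate geometrically, and a direct appeal to the log-concavity of $\beta \mapsto \log c_\beta(n)$ does not immediately yield $c_{p_\theta/2}(n) \leq c_k(n)^{1-\theta} c_{k+1}(n)^\theta$, since it bounds both quantities from above by $c_{k+\theta}(n)$ rather than giving the relative ordering. I would try to close this gap either by constructing an analytic Stein family of operators whose boundary behavior absorbs the mismatch, or, failing that, by adapting the first-stage combinatorial argument to fractional exponents through the binomial expansion of $f^{p/2}$ for outer $f$, thereby sidestepping interpolation altogether at the price of managing the resulting negative-moment corrections in $a_0$.
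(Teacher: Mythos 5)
Your first stage (the case $p=2k$, $k\in\N$) is correct, but it is the classical Carleman--Burbea--Helson argument and the paper itself records that the even-integer case was already known. The whole difficulty of the conjecture lives in your second stage, and there the argument breaks down. After interpolating the two contractions $D_k\hookrightarrow H^{2k}$ and $D_{k+1}\hookrightarrow H^{2k+2}$ you obtain $\|f\|_{H^{p_\theta}}\le\|f\|_{\ell^2(w_\theta)}$ with $w_\theta(n)=c_k(n)^{1-\theta}c_{k+1}(n)^{\theta}$, so to conclude you need the interpolated weight to be dominated by the target weight, i.e.
\[
c_k(n)^{1-\theta}\,c_{k+1}(n)^{\theta}\;\le\;c_{p_\theta/2}(n)\qquad\text{for all }n
\]
(note this is the reverse of the inequality you wrote down). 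This is not a gap that can be closed: the inequality is simply false. Take $k=1$, $\theta=\tfrac12$, so $p_\theta=\tfrac83$ and $p_\theta/2=\tfrac43$; then $w_\theta(n)=\sqrt{n+1}$ while $c_{4/3}(n)=\prod_{j=1}^{n}\frac{j+1/3}{j}\asymp n^{1/3}$, and already at $n=1$ one has $\sqrt2>\tfrac43$ (and the discrepancy grows like $n^{1/2}$ versus $n^{1/3}$). This is exactly why the interpolation literature cited in the paper only produces a contractive embedding $D_\beta\hookrightarrow H^p$ for some $\beta$ \emph{strictly larger} than $p/2$ when $p\notin 2\N$. Your two proposed rescues are not developed: a Stein analytic family would have to overcome the same harmonic-versus-geometric mean mismatch, and the ``binomial expansion of $f^{p/2}$ for outer $f$'' has no finite convolution structure on which the Cauchy--Schwarz counting argument can operate.

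The paper proves the conjecture by an entirely different, variational mechanism that does not pass through the even-integer case at all. One takes an extremal polynomial $q_n$ of degree at most $n$ for the embedding constant $C_{p,n}$, compares it with its dilations $q_n(rz)$ as $r\to1^-$, and uses the Hardy--Stein identity to turn the extremality into the necessary condition
\[
\frac{p}{2}\int_\D |q_n'|^2|q_n|^{p-2}\,dA\;\ge\;C_{p,n}^p\sum_{k\ge1}k\,c_{p/2}(k)|a_k|^2 .
\]
The left-hand side is then bounded from above by H\"older's inequality together with Kulikov's recently proved contractive embeddings between Bergman spaces (Theorem~A), and an elementary monotonicity of the ratio $c_{p/2}(k)c_{2/p+1}(k-1)/k$ shows the two bounds are incompatible unless $q_n$ is constant. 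The essential new input is Kulikov's theorem; without some such external ingredient (or a genuinely new idea replacing it), the interpolation route you outline cannot reach the sharp weight $c_{p/2}(n)$.
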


        The main goal of this paper is to prove Conjecture \ref{BOSZ} and to show its connection with the Riesz projection.

    \section{Proof of Conjecture \ref{BOSZ}}
    
        For the sake of clarity, $C_{p,n}$ will denote the restriction of the inclusion operator from $D_{\frac{p}{2}}$ into $H^p$ to the subspace of polynomials of degree $n$. That is,
            \[
                C_{p,n} := \sup \left \{ \left \| \sum_{k = 0}^n a_k z^k \right \|_{H^p} \: : \: \sum_{k = 0}^n c_{\frac{p}{2}} (k) | a_k |^2 = 1 \right \}.
            \]

        Clearly, \eqref{BOSZDisp} holds for any holomorphic function $f$ if and only if $C_{p,n} = 1$ for all $n \geq 1$. For a fixed $n$, it is trivial that there exists a polynomial $q_n$ of degree at most $n$ such that $\| q_n \|_{D_{\frac{p}{2}}} = 1$ and $\| q_n \|_{H^p} = C_{p,n}$. We are going to prove that such polynomials must be unimodular constants.

        \begin{thm} \label{MainTheo}
            Let $p > 2$. Then, for every $n \geq 1$ we have that $C_{p,n} = 1$. Moreover, if $q$ is a polynomial then the identity $\| q \|_{H^p} = \| q \|_{D_{\frac{p}{2}}}$ is possible if and only if $q$ is constant. In particular, Conjecture \ref{BOSZ} is true.
        \end{thm}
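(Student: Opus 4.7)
My plan is to exploit a duality reformulation. Under the $H^2$ pairing $\langle f, g \rangle = \sum_k a_k \overline{b_k}$, the Hilbert space $D_{p/2}$ has dual $A^2_{p/2-2}$ (the weights $c_{p/2}(k)$ and $1/c_{p/2}(k)$ are reciprocal), while $H^p$ pairs with $L^{p'}(\T)$ with $p^{-1}+(p')^{-1}=1$. A direct calculation (writing $\int q\bar h = \langle q, P_+ h \rangle_{H^2}$ for analytic $q$ and applying Cauchy--Schwarz in the $D_{p/2} \times A^2_{p/2-2}$ pairing) shows that the inequality $\|q\|_{H^p} \le \|q\|_{D_{p/2}}$ for every analytic polynomial $q$ is equivalent to the contractive Riesz projection bound
\[
\|P_+ h\|_{A^2_{p/2-2}} \le \|h\|_{L^{p'}(\T)}, \qquad h \in L^{p'}(\T),
\]
for $p > 2$. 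This accounts for the paper's emphasis on the Riesz projection. Note that combining Theorem~\ref{Kulikov} with the Hollenbeck--Verbitsky norm $\|P_+\|_{L^{p'} \to L^{p'}} = 1/\sin(\pi/p)$ only yields the same inequality with a constant $1/\sin(\pi/p) > 1$, so a genuinely new argument is needed.

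As a sanity check, the even-integer case $p = 2m$ can be proved directly. Writing $q^m = \sum_j b_j z^j$ with $b_j = \sum_{n_1 + \cdots + n_m = j} a_{n_1} \cdots a_{n_m}$, Parseval gives $\|q\|_{H^{2m}}^{2m} = \sum_j |b_j|^2$. Cauchy--Schwarz applied to each $b_j$ (whose defining sum has exactly $c_m(j)$ terms), followed by the sub-multiplicativity
\[
c_m(n_1 + \cdots + n_m) \le c_m(n_1) \cdots c_m(n_m),
\]
yields $\|q\|_{H^{2m}} \le \|q\|_{D_m}$. The sub-multiplicativity reduces, via the rising-factorial expression $c_m(n) = \prod_{i=1}^{m-1}(n+i)/(m-1)!$, to the elementary identity $(n+i)(\ell+i) - i(n+\ell+i) = n\ell \ge 0$ (with equality iff $n\ell = 0$). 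The equality case then forces $q$ to be a constant whenever $m \ge 2$: if any $a_\ell$ with $\ell \ge 1$ were nonzero, the tuple $(\ell, \ell, 0, \ldots, 0)$ would contribute strictly to the combinatorial step.

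The main obstacle is extending to non-integer $p > 2$, where $q^{p/2}$ is not a polynomial and the combinatorial machinery breaks down. I would turn to the extremizer $q_n = \sum_{k=0}^n a_k z^k$ (whose existence is the compactness remark preceding the theorem) and differentiate the Lagrangian $\|q\|_{H^p}^p - \lambda \|q\|_{D_{p/2}}^p$ in $\overline{a_k}$ to obtain the Euler--Lagrange system
\[
\widehat{|q_n|^{p-2} q_n}(k) = C_{p,n}^p \, c_{p/2}(k) \, a_k, \qquad k = 0, 1, \ldots, n.
\]
A short calculation already rules out non-constant extremizers for $n = 1$ (the equations for $k = 0$ and $k = 1$ force $a_1 = 0$ after cancellation), so the challenge is to propagate this rigidity to all $n$ and all non-integer $p$. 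Alternative attacks include proving the dual Riesz projection estimate directly by an adaptation of Kulikov's subharmonic/integral methods to the weighted Bergman target $A^2_{p/2-2}$, or finding a monotonicity or convexity property of $p \mapsto C_{p,n}$ that closes the gap between consecutive even integers while preserving the strict inequality needed for the equality characterization.
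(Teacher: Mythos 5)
Your proposal does not prove the theorem for general $p>2$. What you actually complete is the case $p=2m$, $m\in\N$, via Cauchy--Schwarz and the submultiplicativity $c_m(n+\ell)\le c_m(n)c_m(\ell)$ --- this computation is correct (it is the classical Helson/Bonami-type argument), but that case was already known, as the paper itself notes. For non-integer $p$ you offer a menu of strategies (propagating rigidity from the Euler--Lagrange system, proving the dual Riesz-projection estimate directly, monotonicity or interpolation in $p$), none of which is carried out; the $n=1$ computation is asserted rather than shown, and there is no mechanism on the table that preserves the exact constant $1$, let alone the strict inequality needed for the equality characterization, across non-integer $p$. The duality reformulation $\|P_+h\|_{A^2_{p/2-2}}\le\|h\|_{L^{p'}}$ is a correct equivalence (the paper exploits the easy direction of it in its final corollary), but it is exactly as hard as the original statement and so does not advance the proof.

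The missing idea is the paper's variational argument. One takes the extremal polynomial $q_n$, compares it with its dilations $q_n(rz)$ (which are admissible competitors of the same degree), and lets $r\to1^-$; the Hardy--Stein identity converts the resulting first-order information into
\[
\frac{p}{2}\int_\D |q_n'(z)|^2|q_n(z)|^{p-2}\,dA(z)\;\ge\;C_{p,n}^p\sum_{k\ge1}k\,c_{\frac{p}{2}}(k)|a_k|^2.
\]
The left-hand side is then bounded by H\"older with exponents $\frac{2p}{p+2}$ and $\frac{2p}{p-2}$, giving $\|q_n'\|_{A^{4p/(p+2)}}^2\|q_n\|_{A^{2p}}^{p-2}$, and Kulikov's Theorem~\ref{Kulikov} is applied to each factor --- with \emph{strict} inequality because a non-constant polynomial is never one of the extremal reproducing kernels \eqref{ReproducingKernel}. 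A monotonicity inequality for the Gamma-function ratio $c_{\frac{p}{2}}(k)c_{\frac{2}{p}+1}(k-1)/k$ then shows the two sides are incompatible unless all $a_k$ with $k\ge1$ vanish. This mechanism works uniformly for all real $p>2$ and simultaneously delivers the equality case; your write-up contains no substitute for it.
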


        \begin{proof}
            Let $n \geq 1$ and take $q_n$ a polynomial of degree at most $n$ satisfying that $\| q_n \|_{D_{\frac{p}{2}}} = 1$ and $\| q_n \|_{H^p} = C_{p,n}$. Consider $r \in (0, 1)$ and let $q_r(z)~:=~q_n(rz)$. It is clear that $q_r$ is also a polynomial of degree at most $n$, so $\| q_r \|_{H^p} \leq C_{p,n} \| q_r \|_{D_{\frac{p}{2}}}$.

            On the one hand,
                \[
                    \| q_r \|_{D_{\frac{p}{2}}}^2 = \sum_{k = 0}^n c_{\frac{p}{2}} (k) r^{2k} |a_k|^2 = 1 - \sum_{k = 1}^n c_{\frac{p}{2}} (k) (1 - r^{2k}) |a_k|^2
                \]
                and therefore
                \[
                    \| q_r \|_{D_{\frac{p}{2}}}^p = 1 - \dfrac{p}{2} \sum_{k = 1}^n c_{\frac{p}{2}} (k) (1 -r^{2k}) |a_k|^2 + o (1 - r), \quad r \rightarrow 1^-.
                \]

            On the other hand, it is obvious that $\| q_r \|_{H^p}^p = M_p^p(r,q_n)$. Thus, we have that
                \[
                    M_p^p(r,q_n) - C_{p,n}^p \leq - C_{p,n}^p \dfrac{p}{2} \sum_{k = 1}^n c_{\frac{p}{2}} (k) (1 - r^{2k}) | a_k |^2 + o(1 - r), \quad r \rightarrow 1^-,
                \]
                and then, we see that
                \[
                    \dfrac{M_p^p (1, q_n) - M_p^p (r, q_n)}{1 - r} \geq C_{p,n}^p \dfrac{p}{2} \sum_{k = 1}^n c_{\frac{p}{2}} (k) \dfrac{1 - r^{2k}}{1 - r} |a_k|^2 + \dfrac{o(1 - r)}{1 - r}, \quad r \rightarrow 1^-.
                \]
                
             Taking $r \rightarrow 1^-$, the \emph{Hardy--Stein identity} \cite[Theorem~2.18]{MR4321142}
                \[
                   \dfrac{d}{dr} M_p^p (r, f) = \dfrac{p^2}{2r} \int_{r\D} |f'(z)|^2 |f(z)|^{p-2} \: dA(z), \quad f \in \Hol,
                \]
                yields that
                \begin{equation} \label{KeyProperty}
                    \dfrac{p}{2} \int_\D |q_n'(z)|^2 |q_n(z)|^{p-2} \: dA(z) \geq C_{p,n}^p \sum_{k = 1}^n  k c_{\frac{p}{2}} (k) | a_k |^2 .
                \end{equation}

            We are going to show that \eqref{KeyProperty} is possible if and only if $q_n$ is constant. To this end, assume that $a_{k_0} \neq 0$ for some $k_0 \in \{1, \ldots, n\}$. Since $1 = \frac{p + 2}{2p} + \frac{p-2}{2p}$, Hölder's inequality implies that
                \[
                    \int_\D |q_n'(z)|^2 |q_n (z)|^{p-2} \: dA(z) \leq \| q_n' \|_{A^\frac{4p}{p+2}}^2 \| q_n \|_{A^{2p}}^{p-2},
                \]
                and because of Theorem \ref{Kulikov}, we deduce that
                \[
                    \int_\D |q_n'(z)|^2 |q_n (z)|^{p-2} \: dA(z) < \| q_n' \|^2_{A^2_{\frac{2}{p}-1}} \| q_n \|_{H^{p}}^{p-2} = C_{p,n}^{p-2} \sum_{k = 1}^n \dfrac{k^2}{c_{\frac{2}{p} + 1} (k-1)} |a_k|^2,
                \]
                since $q_n$ is not one of the reproducing kernels \eqref{ReproducingKernel}. 

                Summing up, we have that
                \begin{equation} \label{Contradiction}
                    C_{p,n}^2 \sum_{k = 1}^n k c_{\frac{p}{2}} (k) | a_k |^2 < \dfrac{p}{2} \sum_{k = 1}^n \dfrac{k^2}{c_{\frac{2}{p} + 1} (k-1)} |a_k|^2.
                \end{equation}
                However, it is easy to check that
                \[
                    \dfrac{p}{2} \dfrac{k}{c_{\frac{2}{p} + 1} (k-1)} \leq c_{\frac{p}{2}} (k), \quad \forall k \geq 1.
                \]
                Indeed, this inequality is a consequence of the monotonicity of the sequence 
                \[
                    A_k := \dfrac{c_{\frac{p}{2}} (k) c_{\frac{2}{p} + 1} (k - 1)}{k} = \dfrac{\Gamma \left( k + \frac{p}{2} \right) \Gamma \left( k + \frac{2}{p} \right)}{\Gamma \left( \frac{p}{2} \right) \Gamma \left( \frac{2}{p} + 1 \right) k!^2},
                \]
                which is increasing since
                \[
                    \dfrac{A_{k + 1}}{A_k} = \dfrac{\left( k + \frac{p}{2} \right) \left( k + \frac{2}{p}\right)}{(k + 1)^2} = \dfrac{k^2 + \left( \frac{p}{2} + \frac{2}{p} \right)k + 1}{k^2 + 2k + 1} > 1.
                \]
                
                Then, we see that \eqref{Contradiction} is not possible because $a_{k_0}$ is not zero. Thus, $q_n$ is constant and $C_{p,n} = 1$.
        \end{proof}

        \begin{coro}
            Let $p > 2$ and $\beta > 0$. Then $\iota : D_\beta \rightarrow H^p$ is contractive if and only if $\beta \geq \frac{p}{2}$.
        \end{coro}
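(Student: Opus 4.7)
The plan is to treat the two implications of the equivalence separately. The sufficiency comes essentially for free from Theorem \ref{MainTheo} once we notice the correct monotonicity of the weights $c_\beta(n)$ in $\beta$; the necessity is handled by testing the inclusion on a one-parameter family of simple polynomials, tuned so that a second-order Taylor expansion of both norms exposes the threshold $\beta = \frac{p}{2}$.

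For the ``if'' direction, suppose $\beta \geq \frac{p}{2}$. For each fixed $n \geq 1$,
\[
c_\beta(n) = \frac{\beta(\beta+1)\cdots(\beta+n-1)}{n!}
\]
is manifestly non-decreasing in $\beta > 0$ (and $c_\beta(0) = 1$ for every $\beta$), so a termwise comparison of the Taylor coefficients gives $\|f\|_{D_\beta} \geq \|f\|_{D_{p/2}}$ for every $f \in \Hol$. Chaining with Theorem \ref{MainTheo} then delivers the contractive inequality $\|f\|_{D_\beta} \geq \|f\|_{H^p}$.

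For the ``only if'' direction, assume $\beta < \frac{p}{2}$ and test with $f_\epsilon(z) = 1 + \epsilon z$ for $\epsilon > 0$ small. Immediately $\|f_\epsilon\|_{D_\beta}^2 = 1 + \beta \epsilon^2$. On the Hardy side, expanding $|1 + \epsilon e^{it}|^p = (1 + 2\epsilon \cos t + \epsilon^2)^{p/2}$ to second order in $\epsilon$ and integrating over $\T$ (the $\cos t$ term averages to zero, the $\cos^2 t$ term to $\tfrac{1}{2}$) yields
\[
\|f_\epsilon\|_{H^p}^p = 1 + \frac{p^2}{4}\,\epsilon^2 + O(\epsilon^3), \qquad \epsilon \to 0^+,
\]
so $\|f_\epsilon\|_{H^p}^2 = 1 + \frac{p}{2}\,\epsilon^2 + O(\epsilon^3)$. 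Since $\frac{p}{2} > \beta$, the strict inequality $\|f_\epsilon\|_{H^p} > \|f_\epsilon\|_{D_\beta}$ must hold for all sufficiently small $\epsilon > 0$, which already rules out contractivity of $\iota$ on its (dense) polynomial subspace. Neither step presents a genuine obstacle: the monotonicity in $\beta$ is elementary, and the second-order expansion of $|1 + \epsilon e^{it}|^p$ is a routine computation whose only subtlety is keeping track of the coefficient $\frac{p^2}{4}$ that precisely matches the threshold.
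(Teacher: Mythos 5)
Your proposal is correct. The sufficiency half is exactly the paper's argument: the coefficients $c_\beta(n)$ are non-decreasing in $\beta$, so $\| f \|_{D_{p/2}} \leq \| f \|_{D_\beta}$ for $\beta \geq \frac{p}{2}$, and Theorem \ref{MainTheo} finishes the job. The difference is in the necessity half: the paper simply cites the earlier observation of Brevig, Ortega-Cerd\`a, Seip and Zhao and gives no argument, whereas you supply a self-contained computation with the test functions $f_\epsilon(z) = 1 + \epsilon z$. Your expansion checks out: $\| f_\epsilon \|_{D_\beta}^2 = 1 + \beta\epsilon^2$ since $c_\beta(0)=1$ and $c_\beta(1)=\beta$, and the second-order Taylor expansion of $(1+2\epsilon\cos t+\epsilon^2)^{p/2}$ gives $\frac{p}{2} + \frac{p(p-2)}{4} = \frac{p^2}{4}$ as the coefficient of $\epsilon^2$ after averaging, hence $\| f_\epsilon \|_{H^p}^2 = 1 + \frac{p}{2}\epsilon^2 + O(\epsilon^3)$; for $\beta < \frac{p}{2}$ this beats $1+\beta\epsilon^2$ for small $\epsilon$, and since $f_\epsilon$ is a polynomial it lies in $D_\beta$, so contractivity fails outright. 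This makes the corollary fully self-contained at the cost of a short computation; the paper's version is shorter but leans on an external reference for half the statement. The one cosmetic point worth tightening is the phrase about ruling out contractivity ``on its dense polynomial subspace'' --- no density argument is needed, as a single polynomial counterexample already disproves the inequality on $D_\beta$ itself.
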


        \begin{proof}
            The necessity of the condition $\beta \geq \frac{p}{2}$ was already noticed in \cite[Section~4]{MR3858278}. Theorem \ref{MainTheo} and the monotonicity of the weight $\{ c_\beta(n) \}_{n \geq 0}$ with respect to $\beta$ yield its sufficiency.
        \end{proof}

    \section{Extension to several variables and Dirichlet series} \label{SectionDirichlet}

        In this section we are going to show that the inequality
            \[
                \| f \|_{H^p} \leq \| f \|_{D_\frac{p}{2}}
            \]
            naturally induces an analogous inequality in the setting of spaces of Dirichlet series, in the same way that Theorem \ref{Kulikov} extended Helson's inequality \cite{MR2263964},
                \[
                    \left ( \sum_{k = 1}^n \dfrac{|a_k|^2}{d_2(k)} \right)^\frac{1}{2} \leq \lim_{T \rightarrow \infty} \dfrac{1}{2T} \int_{-T}^T \left| \sum_{k = 1}^n \dfrac{a_k}{k^{it}}\right| \: dt,
                \]
                to further Hardy spaces of Dirichlet series.

        For this purpose, we will need the following notation. For $d \geq 1$ and $p > 2$, we consider the spaces 
            \[
                H^p (\T^d) := \left \{ f \in L^p (\T^d) \: : \: \tilde{f} (k_1, \ldots, k_d) = 0 \mbox{ if } k_j < 0 \mbox{ for some } j \right \}
            \]
            and
            \[
                D_\frac{p}{2} (\D^d) := \left \{ f \in \mathcal{H} (\D^d) \: : \: \sum_{k_1, \ldots, k_d \geq 0} |a_{k_1, \ldots, k_d}|^2 c_\frac{p}{2}(k_1) \ldots c_\frac{p}{2} (k_d) < \infty \right\}.
            \]
            
        Let $\| \cdot \|_{H^p(\T^d)}$ and $\| \cdot \|_{D_\frac{p}{2}(\D^d)}$ be the obvious choice of norms for these spaces. As a consequence of the contractivity of $\iota: D_{\frac{p}{2}} \rightarrow H^p$, we can deploy the ingenious argument of Helson \cite{MR2263964} (also known as Bonami's lemma because of Lemma~1 in \cite{MR283496}) to show that the inclusion of $D_{\frac{p}{2}} (\D^d)$ in $H^p (\T^d)$ is also contractive for all $d \geq 2$. Since this technique is considered standard for the experts, we omit the details of the proof. 
        
        \begin{thm} \label{ExtFurtherDim}
            Let $d \geq 1$ and $p > 2$. Then we have that
                \[
                    \| f \|_{H^p (\T^d)} \leq \| f \|_{D_\frac{p}{2} (\D^d)}
                \]
                for all $f \in D_\frac{p}{2} (\D^d)$.
        \end{thm}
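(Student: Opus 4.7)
The plan is to induct on the dimension $d$, with Theorem \ref{MainTheo} serving as the base case $d = 1$. By a standard density argument it suffices to treat polynomials in $d$ variables, which bypasses any question about what the one-variable slices of $f$ mean on the distinguished boundary.

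For the inductive step, assume the contractive inclusion $D_{\frac{p}{2}}(\D^{d-1}) \hookrightarrow H^p(\T^{d-1})$ has already been established. Given a polynomial $f$ on $\D^d$, expand it with respect to the last variable,
\[
    f(z_1, \ldots, z_d) = \sum_{k \geq 0} f_k(z_1, \ldots, z_{d-1}) \, z_d^k,
\]
where each $f_k$ is a polynomial on $\D^{d-1}$. For fixed $(w_1, \ldots, w_{d-1}) \in \T^{d-1}$ the slice $z_d \mapsto f(w_1, \ldots, w_{d-1}, z_d)$ is a polynomial, so Theorem \ref{MainTheo} applied in the single variable $z_d$ yields
\[
    \int_\T |f(w_1, \ldots, w_{d-1}, e^{it_d})|^p \frac{dt_d}{2\pi} \leq \left( \sum_{k \geq 0} c_{\frac{p}{2}}(k) \, |f_k(w_1, \ldots, w_{d-1})|^2 \right)^{\frac{p}{2}}.
\]

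Next I would integrate this slice inequality over $\T^{d-1}$ and invoke Minkowski's integral inequality, which is available precisely because $p > 2$ forces $p/2 > 1$; this produces
\[
    \|f\|_{H^p(\T^d)}^2 \leq \sum_{k \geq 0} c_{\frac{p}{2}}(k) \, \|f_k\|_{H^p(\T^{d-1})}^2.
\]
Applying the inductive hypothesis to each coefficient $f_k$ then gives
\[
    \|f\|_{H^p(\T^d)}^2 \leq \sum_{k \geq 0} c_{\frac{p}{2}}(k) \, \|f_k\|_{D_{\frac{p}{2}}(\D^{d-1})}^2,
\]
and unraveling the right-hand side by the definition of the $D_{\frac{p}{2}}(\D^{d-1})$-norm recovers exactly the multi-index series $\|f\|_{D_{\frac{p}{2}}(\D^d)}^2$, closing the induction.

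The whole argument is essentially Helson's/Bonami's tensorization trick, so I do not expect a conceptual obstacle; the only sensitive point is that one needs $p/2 \geq 1$ to swap the $L^{p/2}$ integral with the $\ell^2$ sum via Minkowski, which is exactly where the hypothesis $p > 2$ is used (and explains why the same scheme would not work below the Hilbert exponent). The density-reduction to polynomials is routine since both norms are continuous under dilations $f \mapsto f_r(z) := f(rz)$ with $r \to 1^-$.
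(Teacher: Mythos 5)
Your argument is precisely the Helson--Bonami tensorization that the paper invokes by reference and whose details it omits as standard: slice in the last variable, apply the one-variable contraction of Theorem \ref{MainTheo} on each slice, and use the triangle inequality in $L^{p/2}(\T^{d-1})$ (valid since $p/2 \geq 1$, which is where $p>2$ enters) to pull the $\ell^2$ sum outside before closing the induction. The steps check out, including the routine reduction to polynomials, so this is a correct filling-in of the same proof.
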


            The Hardy space of Dirichlet series $\h^p$ can be identified with $H^p (\T^\infty)$ by means of the Bohr lift \cite{MR1919645}, so from Theorem \ref{ExtFurtherDim} we deduce the inequality below.

        \begin{coro} \label{CoroDirichlet}
            If $p > 2$ and $n \geq 1$, we have that
                \begin{equation} \label{CoroDirichletDisp}
                    \left( \lim_{T \rightarrow \infty} \dfrac{1}{2 T} \int_{-T}^{T} \left| \sum_{k = 1}^n \dfrac{a_k}{k^{it}} \right|^p \: dt \right)^\frac{1}{p} \leq \left( \sum_{k = 1}^n |a_k|^2 d_\frac{p}{2}(k) \right)^\frac{1}{2},
                \end{equation}
                where $\left \{ d_\frac{p}{2} (n) \right \}_{n \geq 1}$ is the sequence of coefficients of $\zeta^\frac{p}{2}(s)$ as a Dirichlet series.
        \end{coro}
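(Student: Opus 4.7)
The plan is to derive Corollary \ref{CoroDirichlet} from Theorem \ref{ExtFurtherDim} via the Bohr lift, as the paragraph preceding the corollary already suggests. Given the Dirichlet polynomial $P(s) := \sum_{k = 1}^n a_k k^{-s}$, I would enumerate the primes $p_1 < p_2 < \cdots < p_d$ not exceeding $n$, factor each $k \leq n$ uniquely as $k = \prod_{j = 1}^d p_j^{\alpha_j(k)}$, and encode $P$ by the polynomial
\[
F(z_1, \ldots, z_d) := \sum_{k = 1}^n a_k z_1^{\alpha_1(k)} \cdots z_d^{\alpha_d(k)} \in \mathcal{H}(\D^d),
\]
so that $|P(it)| = |F(p_1^{-it}, \ldots, p_d^{-it})|$ for every $t \in \mathbb{R}$.

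Since $\log p_1, \ldots, \log p_d$ are linearly independent over $\mathbb{Q}$, Kronecker's theorem guarantees that the curve $t \mapsto (p_1^{-it}, \ldots, p_d^{-it})$ is equidistributed on $\T^d$ with respect to the normalized Haar measure; applied to the continuous function $|F|^p$ this yields
\[
\lim_{T \rightarrow \infty} \dfrac{1}{2T} \int_{-T}^{T} |P(it)|^p \: dt = \| F \|_{H^p (\T^d)}^p,
\]
so the left-hand side of \eqref{CoroDirichletDisp} is identified with $\| F \|_{H^p(\T^d)}$.

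For the right-hand side, the monomials appearing in $F$ are pairwise distinct because the factorisation of each $k$ is unique, hence
\[
\| F \|_{D_\frac{p}{2} (\D^d)}^2 = \sum_{k = 1}^n |a_k|^2 \prod_{j = 1}^d c_\frac{p}{2} (\alpha_j(k)).
\]
Combining the Euler product $\zeta^{p/2}(s) = \prod_p (1 - p^{-s})^{-p/2}$ with the expansion $(1 - z)^{-p/2} = \sum_{\alpha \geq 0} c_\frac{p}{2}(\alpha) z^\alpha$ shows that $d_\frac{p}{2}$ is multiplicative and satisfies $d_\frac{p}{2}(p_j^\alpha) = c_\frac{p}{2}(\alpha)$; therefore $d_\frac{p}{2}(k) = \prod_{j = 1}^d c_\frac{p}{2}(\alpha_j(k))$ and the right-hand side of \eqref{CoroDirichletDisp} equals $\| F \|_{D_\frac{p}{2}(\D^d)}$. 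Applying Theorem \ref{ExtFurtherDim} to $F$ will then close the argument. The only mildly delicate point is this multiplicative factorisation of $d_\frac{p}{2}(k)$ into one-variable weights, but it is forced by the Euler product, so no real obstacle arises.
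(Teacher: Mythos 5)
Your proposal is correct and is precisely the argument the paper has in mind: the paper's entire "proof" is the one-sentence remark that $\mathcal{H}^p$ is identified with $H^p(\T^\infty)$ via the Bohr lift, so the corollary follows from Theorem \ref{ExtFurtherDim}. You have merely (and correctly) filled in the standard details — Kronecker equidistribution for the left-hand side, multiplicativity of $d_{p/2}$ for the right-hand side — working on the finite torus $\T^d$, which suffices for a Dirichlet polynomial.
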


        It should be pointed out that it was already known that \eqref{CoroDirichletDisp} holds if $p = 2k$ \cite[Lemma~3]{MR3870953}, and interpolation methods showed that for all $p \in (2, \infty) \setminus \N$ there exists a $\beta > \frac{p}{2}$ such that the inclusion of $D_\beta(\D^\infty)$ in $H^p (\T^\infty)$ is contractive \cite{MR3078269}. Thus, Corollary \ref{CoroDirichlet} is a refinement of all these results.

    \section{Contractive inequalities for the Riesz projection}

        In this section we are going to expose the connection of Theorem \ref{MainTheo} with the classical Riesz's projection $P_+$. We recall that $P_+$ is defined as
            \[
                P_+ F (e^{it}) := \sum_{k = 0}^\infty \widehat{F}(k) e^{ikt}
            \]
            for all $F \in L^1 (\T)$.

        As a consequence of a celebrated result of Riesz \cite{MR1544909}, $P_+$ is bounded from $L^q (\T)$ to itself for every finite $q > 1$, a fact that was quantified by Hollenbeck and Verbitsky \cite{MR1780482} when they proved the following sharp inequality:
                \[
                    \| P_+ G \|_{H^q} \leq \csc \left ( \dfrac{\pi}{q} \right) \| G \|_{L^q}, \quad \forall G \in L^{q} (\T).
                \]
                Marzo and Seip \cite{MR2793046} showed that $P_+$ is a contractive operator from $L^q (\T)$ to $H^\frac{4q}{q + 2}$ if $2 \leq q \leq \infty$, and using duality arguments the same can be said from $L^q (\T)$ to $H^\frac{2q}{4-q}$ if $\frac{4}{3} \leq q \leq 2$ \cite{MR3858278}. However, the norm of $P_+ : L^q (\T) \rightarrow X$ is not known in general if $X$ is a space of analytic functions containing $H^q$. In the case that $X$ is a Hardy space, we can find in the literature the conjecture below.
        
        \begin{conjectureA}[Brevig, Ortega-Cerdà, Seip and Zhao \cite{MR3858278}] \label{StrongBOSZ}
            The Riesz projection $P_+$ is a contraction from $L^{p'} (\T)$ to $H^{\frac{4}{p}}$ for all $1 \leq p < \infty$, where $p' = \frac{p}{p-1}$. If $p = \infty$, we have that
                \[
                    \exp \left( \dfrac{1}{2\pi} \int_0^{2\pi} \log \left| P_+ F (e^{it}) \right| \: dt \right) \leq \| F \|_{L^1},
                \]
                for all $F \in L^1 (\T)$.
        \end{conjectureA}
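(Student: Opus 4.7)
I would attempt to derive Conjecture~\ref{StrongBOSZ} from Theorem~\ref{MainTheo} by combining it with a duality/Fourier-analytic argument. By density it suffices to treat trigonometric polynomial $F$. The pairing identity $\int_\T (P_+ F)\,\overline{g}\,dt = \int_\T F\,\overline{g}\,dt$ for analytic $g$, together with H\"older, gives $|\langle F, g\rangle| \leq \|F\|_{L^{p'}}\|g\|_{L^p}$. Expressed in terms of the Hardy norm, this reduces the conjecture to sharpening the usual Hardy-space duality $(H^r)^* \cong H^{r'}$ (which loses the Hollenbeck--Verbitsky factor $\csc(\pi p/4)$) into an isometric statement when restricted to analytic functionals coming from $F \in L^{p'}$.

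For $p \in [1,2]$, so that $4/p \geq 2$, Theorem~\ref{MainTheo} applies to $f = P_+ F$ and yields
\[ \|P_+F\|_{H^{4/p}}^2 \leq \|P_+F\|_{D_{2/p}}^2 = \sum_{n \geq 0} c_{2/p}(n)\,|\widehat{F}(n)|^2, \]
so in this range Conjecture~\ref{StrongBOSZ} would follow from an auxiliary Bessel-type estimate $\sum_{n \geq 0} c_{2/p}(n)|\widehat{F}(n)|^2 \leq \|F\|_{L^{p'}}^2$, which collapses to Parseval at $p = 2$. For $p > 2$ Theorem~\ref{MainTheo} does not apply to $P_+F$ in the $H^{4/p}$ norm, and my plan would be to mimic its Hardy--Stein plus Kulikov proof scheme for $P_+F$ directly, inserting the Cauchy representation $P_+F(z) = \frac{1}{2\pi}\int_\T F(e^{it})(1 - ze^{-it})^{-1}\,dt$ before applying H\"older so that $\|F\|_{L^{p'}}$ enters naturally rather than a $\|P_+F\|_{D_\beta}$-type quantity.

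The main obstacle is that the proposed Bessel estimate already fails at $p = 1$: testing $F(e^{it}) = e^{it}$ gives $c_2(1) = 2$ on the left while $\|F\|_{L^\infty}^2 = 1$ on the right. Theorem~\ref{MainTheo} is therefore too lossy for the strong conjecture, and closing the gap will require new input — most plausibly a refinement of the Hardy--Stein plus Kulikov argument whose extremals align with those of the Riesz-projection problem, namely the Poisson kernels $z \mapsto (1 - \overline{\zeta} z)^{-2/p}$ rather than reproducing kernels of Bergman type. Once the main inequality is secured for all finite $p$, the $p = \infty$ logarithmic case should follow by dividing through by $\|F\|_{L^{p'}}$, taking logarithms, and letting $p \to \infty$, with Jensen's inequality identifying the limit as the geometric mean of $|P_+F|$.
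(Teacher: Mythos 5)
You are attempting to prove a statement that the paper itself does not prove: Conjecture~\ref{StrongBOSZ} is quoted from Brevig, Ortega-Cerd\`a, Seip and Zhao and remains open. What the paper actually does with it is (i) record the known implication that the validity of Conjecture~\ref{StrongBOSZ} on the interval $(1,2)$ would imply Conjecture~\ref{BOSZ}, so that Theorem~\ref{MainTheo} is presented as \emph{evidence for}, not a consequence or source of, the conjecture; and (ii) extract from Theorem~\ref{MainTheo}, via the self-adjointness of $P_+$ and the duality of $A^2_{\frac{p}{2}-2}$ with $D_{\frac{p}{2}}$ under the $H^2$-pairing, the strictly weaker statement that $P_+ : L^{p'}(\T) \to A^2_\alpha$ is contractive exactly when $\alpha \geq \frac{p}{2}-2$. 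Your first paragraph gestures at precisely this duality computation, but when carried out it lands in the Bergman space $A^2_{\frac{p}{2}-2}$, which contains $H^{4/p}$ contractively by Theorem~\ref{Kulikov} for $p>2$; there is no way to reverse that inclusion and recover the conjectured $H^{4/p}$ target.

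Your own diagnosis of the gap is accurate, and it is fatal. The route $\|P_+F\|_{H^{4/p}} \leq \|P_+F\|_{D_{2/p}}$ (valid for $1 \leq p < 2$ by Theorem~\ref{MainTheo}) would require the Bessel-type bound $\sum_{n\geq 0} c_{2/p}(n)\,|\widehat{F}(n)|^2 \leq \|F\|_{L^{p'}}^2$, and your test $F(e^{it}) = e^{it}$ at $p=1$ shows this is false; note that the conjecture is nevertheless \emph{true} at that endpoint by the Marzo--Seip result quoted in the paper ($P_+ : L^\infty(\T) \to H^4$ is contractive), which confirms that the loss sits in your intermediate step rather than in the target. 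The remaining plan --- rerunning the Hardy--Stein/Kulikov scheme with the Cauchy representation of $P_+F$ inserted, then a limiting argument for $p=\infty$ --- is a research programme, not an argument: no inequality is established for any $p$ other than the trivial case $p=2$, and the limiting step would additionally require uniformity in $p$ that nothing in the sketch provides. In short, the proposal contains no proof, consistent with the statement's status as an open conjecture; if you want a provable by-product of Theorem~\ref{MainTheo} in this direction, it is the Bergman-space corollary via Corollary~\ref{CharacBergmanContractive}, not the conjecture itself.
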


        It was already known that if Conjecture \ref{StrongBOSZ} holds in the interval $(1, 2)$ then Conjecture \ref{BOSZ} is true \cite[Theorem 10]{MR3858278}. Thus, Theorem \ref{MainTheo} can be understood as new evidence in favour of Conjecture \ref{StrongBOSZ}. In fact, we finish this paper showing that Theorem \ref{MainTheo} has an actual application to the Riesz projection.

        \begin{coro}
            Let $p > 2$ and $\alpha > -1$. Then $P_+ : L^{p'} (\T) \rightarrow A^2_\alpha$ is  contractive if and only if $\alpha \geq \frac{p}{2} - 2$.
        \end{coro}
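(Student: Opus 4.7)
The plan is to use duality to reduce the statement to the contractive inclusion $D_{\alpha+2} \hookrightarrow H^p$, whose characterization is already provided by the corollary that follows Theorem \ref{MainTheo}: this inclusion is contractive if and only if $\alpha + 2 \geq \frac{p}{2}$, which is precisely the target condition $\alpha \geq \frac{p}{2} - 2$. The bridge is the coefficient pairing $\langle g, h \rangle := \sum_{k \geq 0} \widehat{g}(k) \overline{\widehat{h}(k)}$, under which $A^2_\alpha$ and $D_{\alpha + 2}$ are dual Hilbert spaces via weighted Cauchy--Schwarz with weights $c_{\alpha+2}(k)^{-1}$ and $c_{\alpha+2}(k)$, yielding
\[
    \|g\|_{A^2_\alpha} = \sup \left\{ |\langle g, h\rangle | \: : \: \|h\|_{D_{\alpha+2}} \leq 1 \right\}.
\]
Moreover, whenever $h$ is an analytic polynomial and $F \in L^{p'}(\T)$, this coefficient pairing agrees with $\frac{1}{2\pi} \int_0^{2\pi} F(e^{it}) \overline{h(e^{it})} \, dt = \langle F, h \rangle_{L^{p'}, L^p}$, because the negative Fourier coefficients of $h$ vanish.

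For sufficiency, I would assume $\alpha + 2 \geq \frac{p}{2}$ and invoke the corollary after Theorem \ref{MainTheo} to obtain $\|h\|_{H^p} \leq \|h\|_{D_{\alpha+2}}$ for every polynomial $h$. Combined with the duality formula and Hölder's inequality, this yields, for every $F \in L^{p'}(\T)$,
\[
    \|P_+ F\|_{A^2_\alpha} = \sup_{\|h\|_{D_{\alpha+2}} \leq 1} |\langle F, h\rangle| \leq \sup_{\|h\|_{D_{\alpha+2}} \leq 1} \|F\|_{L^{p'}} \, \|h\|_{H^p} \leq \|F\|_{L^{p'}}.
\]
For necessity, if $\alpha + 2 < \frac{p}{2}$, the same corollary furnishes a polynomial $h$ with $\|h\|_{H^p} > \|h\|_{D_{\alpha+2}}$. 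Standard $L^p$-$L^{p'}$ duality produces an $F \in L^{p'}(\T)$ with $\|F\|_{L^{p'}} = 1$ and $\langle F, h \rangle = \|h\|_{H^p}$, and the weighted Cauchy--Schwarz bound forces
\[
    \|h\|_{H^p} = |\langle P_+ F, h\rangle| \leq \|P_+ F\|_{A^2_\alpha} \, \|h\|_{D_{\alpha+2}},
\]
whence $\|P_+ F\|_{A^2_\alpha} > 1 = \|F\|_{L^{p'}}$, contradicting contractivity.

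The only genuinely nontrivial input is Theorem \ref{MainTheo} itself; beyond that, the argument is a clean duality transfer, so I do not anticipate any serious obstacle past bookkeeping of the pairings and carefully tracking the direction of each inequality.
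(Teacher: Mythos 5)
Your argument is correct, and its sufficiency half is essentially the paper's own proof: both rest on the duality of $A^2_\alpha$ and $D_{\alpha+2}$ under the coefficient pairing, the self-adjointness of $P_+$, H\"older's inequality, and Theorem \ref{MainTheo}. The only cosmetic difference there is that the paper first establishes the endpoint case $\alpha = \frac{p}{2}-2$ and then passes to larger $\alpha$ via the monotonicity of Bergman norms (Corollary \ref{CharacBergmanContractive}), whereas you absorb that monotonicity on the Dirichlet side by invoking the corollary following Theorem \ref{MainTheo} for $\beta = \alpha+2 \geq \frac{p}{2}$. Where you genuinely diverge is the necessity: the paper tests $P_+$ directly on the explicit family \eqref{ExampleBOSZ} and leaves the verification as a computation, while you run the duality backwards, showing that contractivity of $P_+ : L^{p'}(\T) \rightarrow A^2_\alpha$ forces contractivity of $\iota : D_{\alpha+2} \rightarrow H^p$ and then quoting the known necessity of $\beta \geq \frac{p}{2}$ from the corollary after Theorem \ref{MainTheo}. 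Your route makes the two contractivity statements visibly equivalent and spares the reader the computation with $F_\varepsilon$, at the cost of importing the cited necessity (which itself ultimately rests on test functions). One small point to patch: the failure of contractivity of $\iota : D_{\alpha+2} \rightarrow H^p$ only directly furnishes some $f \in D_{\alpha+2}$ with $\| f \|_{H^p} > \| f \|_{D_{\alpha+2}}$, not a polynomial; you should add a line noting that dilating ($f_r(z) := f(rz)$, which does not increase the Dirichlet norm while $M_p(r,f) \rightarrow \| f \|_{H^p}$) and then truncating the Taylor series yields a polynomial witness, after which your extremal-$F$ construction goes through verbatim.
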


        \begin{proof}
            Consider the tests
                \begin{equation} \label{ExampleBOSZ}
                    F_\varepsilon (e^{it}) := \dfrac{1 - \varepsilon e^{it}}{(1 - \varepsilon e^{-it})^{1 - \frac{2}{p'}}}, \quad \varepsilon > 0.
                \end{equation}
                It is natural to consider this family of functions here because it shows that, if true, the exponent $\frac{4}{p}$ in Conjecture \ref{StrongBOSZ} cannot be improved \cite[Theorem~9]{MR3858278}. It is a straightforward computation to check that $\| P_+ F_\varepsilon \|_{A^2_\alpha} \leq \| F_\varepsilon \|_{L^{p'}}$ for all $\varepsilon > 0$ implies that $\alpha \geq \frac{p}{2} - 2$, and hence we have proved the necessity of this condition.
            
            Take $F \in L^{p'} (\T)$. Since the dual of $A^2_{\frac{p}{2} - 2}$ with respect to the $H^2$-pairing is $D_{\frac{p}{2}}$ and $P_+$ is self-adjoint, we have that
                \[
                    \left | \left \langle P_+ F, g\right \rangle_{H^2} \right | = \left | \left \langle F, g \right \rangle_{L^2 (\T)} \right | \leq \| F \|_{L^{p'}} \| g \|_{H^p} \leq \| F \|_{L^{p'}} \| g \|_{D_{\frac{p}{2}}},
                \]
                for all $g \in D_{\frac{p}{2}}$ and hence $P_+$ is contractive from $L^{p'}(\T)$ to $A^2_{\frac{p}{2}-2}$. Thus, Corollary \ref{CharacBergmanContractive} yields that
                \[
                    \| P_+ F \|_{A^2_\alpha} \leq \| P_+ F \|_{A^2_{\frac{p}{2} - 2}} \leq \| F \|_{L^{p'}}, \quad \forall F \in L^{p'} (\T),
                \]
                if $\alpha \geq \frac{p}{2} - 2$ and then we have completed the proof.
        \end{proof}

    \section*{Acknowledgements}

        The author wants to thank Professor K. Seip for the interesting discussions about the problem treated in this paper. He is also most grateful to A. Kouroupis, whose useful suggestions have made possible Section \ref{SectionDirichlet} of this paper. The author's work is funded by Grant 275113 of the Research Council of Norway through the Alain Bensoussan Fellowship Programme from ERCIM and is partially supported by grant PID2019-106870GB-I00 from Ministerio de Ciencia e Innovación (MICINN). 
    
    \bibliographystyle{amsplain} 
    \bibliography{Biblio}     

\providecommand{\bysame}{\leavevmode\hbox to3em{\hrulefill}\thinspace}
\providecommand{\MR}{\relax\ifhmode\unskip\space\fi MR }
\providecommand{\MRhref}[2]{%
  \href{http://www.ams.org/mathscinet-getitem?mr=#1}{#2}
}
\providecommand{\href}[2]{#2}
\begin{thebibliography}{10}

\bibitem{MR1919645}
Fr\'{e}d\'{e}ric Bayart, \emph{Hardy spaces of {D}irichlet series and their
  composition operators}, Monatsh. Math. \textbf{136} (2002), no.~3, 203--236.
  \MR{1919645}

\bibitem{MR283496}
Aline Bonami, \emph{\'{E}tude des coefficients de {F}ourier des fonctions de
  {$L^{p}(G)$}}, Ann. Inst. Fourier (Grenoble) \textbf{20} (1970), no.~fasc. 2,
  335--402 (1971). \MR{283496}

\bibitem{MR3870953}
Andriy Bondarenko, Ole~Fredrik Brevig, Eero Saksman, Kristian Seip, and Jing
  Zhao, \emph{Pseudomoments of the {R}iemann zeta function}, Bull. Lond. Math.
  Soc. \textbf{50} (2018), no.~4, 709--724. \MR{3870953}

\bibitem{MR3858278}
Ole~Fredrik Brevig, Joaquim Ortega-Cerd\`a, Kristian Seip, and Jing Zhao,
  \emph{Contractive inequalities for {H}ardy spaces}, Funct. Approx. Comment.
  Math. \textbf{59} (2018), no.~1, 41--56. \MR{3858278}

\bibitem{MR882113}
Jacob Burbea, \emph{Sharp inequalities for holomorphic functions}, Illinois J.
  Math. \textbf{31} (1987), no.~2, 248--264. \MR{882113}

\bibitem{MR1544458}
Torsten Carleman, \emph{Zur {T}heorie der {M}inimalfl\"{a}chen}, Math. Z.
  \textbf{9} (1921), no.~1-2, 154--160. \MR{1544458}

\bibitem{MR0268655}
Peter~L. Duren, \emph{Theory of {$H^{p}$} spaces}, Pure and Applied
  Mathematics, Vol. 38, Academic Press, New York-London, 1970. \MR{0268655}

\bibitem{MR2263964}
Henry Helson, \emph{Hankel forms and sums of random variables}, Studia Math.
  \textbf{176} (2006), no.~1, 85--92. \MR{2263964}

\bibitem{MR1780482}
Brian Hollenbeck and Igor~E. Verbitsky, \emph{Best constants for the {R}iesz
  projection}, J. Funct. Anal. \textbf{175} (2000), no.~2, 370--392.
  \MR{1780482}

\bibitem{MR4472587}
Aleksei Kulikov, \emph{Functionals with extrema at reproducing kernels}, Geom.
  Funct. Anal. \textbf{32} (2022), no.~4, 938--949. \MR{4472587}

\bibitem{MR4331821}
Elliott~H. Lieb and Jan~Philip Solovej, \emph{Wehrl-type coherent state entropy
  inequalities for {$\rm SU(1,1)$} and its {$AX+B$} subgroup}, Partial
  differential equations, spectral theory, and mathematical physics---the {A}ri
  {L}aptev anniversary volume, EMS Ser. Congr. Rep., EMS Press, Berlin, [2021]
  \copyright 2021, pp.~301--314. \MR{4331821}

\bibitem{MR4355935}
Adri\'{a}n Llinares and Dragan Vukoti\'{c}, \emph{Contractive inequalities for
  mixed norm spaces and the {B}eta function}, J. Math. Anal. Appl. \textbf{509}
  (2022), no.~1, Paper No. 125938, 9. \MR{4355935}

\bibitem{MR2793046}
Jordi Marzo and Kristian Seip, \emph{{$L^\infty$} to {$L^p$} constants for
  {R}iesz projections}, Bull. Sci. Math. \textbf{135} (2011), no.~3, 324--331.
  \MR{2793046}

\bibitem{MR4321142}
Miroslav Pavlovi\'{c}, \emph{Function classes on the unit disc---an
  introduction}, De Gruyter Studies in Mathematics, vol.~52, De Gruyter,
  Berlin, [2019] \copyright 2019, 2nd edition [of 3154590]. \MR{4321142}

\bibitem{MR1544909}
Marcel Riesz, \emph{Sur les fonctions conjugu\'{e}es}, Math. Z. \textbf{27}
  (1928), no.~1, 218--244. \MR{1544909}

\bibitem{MR3078269}
Kristian Seip, \emph{Zeros of functions in {H}ilbert spaces of {D}irichlet
  series}, Math. Z. \textbf{274} (2013), no.~3-4, 1327--1339. \MR{3078269}

\bibitem{MR2537698}
Ruhan Zhao and Kehe Zhu, \emph{Theory of {B}ergman spaces in the unit ball of
  {$\Bbb C^n$}}, M\'{e}m. Soc. Math. Fr. (N.S.) (2008), no.~115, vi+103 pp.
  (2009). \MR{2537698}

\end{thebibliography}

\end{document}